\documentclass[11pt,a4paper]{amsart}
\usepackage{graphicx,multirow,array,amsmath,amssymb}

\newtheorem{theorem}{Theorem}

\newtheorem{proposition}[theorem]{Proposition}

\begin{document}

\title{Quantum knots and the number of knot mosaics}

\author[S. Oh]{Seungsang Oh}
\address{Department of Mathematics, Korea University, Anam-dong, Sungbuk-ku, Seoul 136-701, Korea}
\email{seungsang@korea.ac.kr}
\author[K. Hong]{Kyungpyo Hong}
\address{Department of Mathematics, Korea University, Anam-dong, Sungbuk-ku, Seoul 136-701, Korea}
\email{cguyhbjm@korea.ac.kr}
\author[H. Lee]{Ho Lee}
\address{Department of Mathematical Sciences, KAIST, 291 Daehak-ro, Yuseong-gu, Daejeon 305-701, Korea}
\email{figure8@kaist.ac.kr}
\author[H. J. Lee]{Hwa Jeong Lee}
\address{Department of Mathematical Sciences, KAIST, 291 Daehak-ro, Yuseong-gu, Daejeon 305-701, Korea}
\email{hjwith@kaist.ac.kr}

\thanks{2010 Mathematics Subject Classification: 57M25, 57M27, 81P15, 81P68}
\thanks{The corresponding author(Seungsang Oh) was supported by Basic Science Research Program through
the National Research Foundation of Korea(NRF) funded by the Ministry of Science,
ICT \& Future Planning(MSIP) (No.~2011-0021795).}
\thanks{This work was supported by the National Research Foundation of Korea(NRF) grant
funded by the Korea government(MEST) (No. 2011-0027989).}

\begin{abstract}
Lomonaco and Kauffman developed a knot mosaic system to introduce
a precise and workable definition of a quantum knot system.
This definition is intended to represent an actual physical quantum system.
A knot $(m,n)$-mosaic is an $m \times n$ matrix of mosaic tiles
($T_0$ through $T_{10}$ depicted in the introduction)
representing a knot or a link by adjoining properly that is called suitably connected.
$D^{(m,n)}$ is the total number of all knot $(m,n)$-mosaics.
This value indicates the dimension of the Hilbert space of these quantum knot system.
$D^{(m,n)}$ is already found for $m,n \leq 6$ by the authors.

In this paper, we construct an algorithm producing the precise value of $D^{(m,n)}$ for $m,n \geq 2$
that uses recurrence relations of state matrices
that turn out to be remarkably efficient to count knot mosaics.
$$ D^{(m,n)} = 2 \, \| (X_{m-2}+O_{m-2})^{n-2} \| $$
where $2^{m-2} \times 2^{m-2}$ matrices $X_{m-2}$ and $O_{m-2}$ are defined by
$$ X_{k+1} = 
\begin{bmatrix} X_k & O_k \\ O_k & X_k  \end{bmatrix}
\ \mbox{and } \
O_{k+1} = 
\begin{bmatrix} O_k & X_k \\ X_k & 4 \, O_k  \end{bmatrix} $$
for $k=0,1, \cdots, m-3$, with $1 \times 1$ matrices
$X_0 = \begin{bmatrix} 1  \end{bmatrix}$ and
$O_0 = \begin{bmatrix} 1  \end{bmatrix}$.
Here $\|N\|$ denotes the sum of all entries of a matrix $N$.
For $n=2$, $(X_{m-2}+O_{m-2})^0$ means the identity matrix of size $2^{m-2} \times 2^{m-2}$.
\end{abstract}

\maketitle

\section{Introduction} \label{sec:intro}
During the last three decades, much of the theory of knots has been applied in quantum physics.
One of remarkable discovery in the theory of knots is the Jones polynomial,
and it turned out that the explanation of the Jones polynomial has to do with quantum theory
\cite{J1, J2, K1, K2, L, LK2, SJ}.
Lomonaco and Kauffman introduced quantum knots to explain how to make quantum information versions
of mathematical structures in the series of papers \cite{LK1, LK3, LK4, LK5}.
They build a knot mosaic system to set the foundation for a quantum knot system,
which can be viewed as a blueprint for the construction of an actual physical quantum system.
Their definition of quantum knots was based on the planar projections of knots and the Reidemeister moves.
They model the topological information in a knot by a state vector in a Hilbert space
that is directly constructed from knot mosaics.

This paper is a sequel to the research program on finding the total number of knot mosaics
announced in \cite{HLLO1, HLLO2}.
This counting is very important because
the total number of knot mosaics is indeed the dimension of the Hilbert space of these quantum knot mosaics.
In \cite{HLLO2}, a partition matrix argument was developed by the authors to count small knot mosaics.
In this sequel, we generalize this argument to give an algorithm for counting all knot mosaics
that uses recurrence relations of matrices that are called state matrices.
This algorithm using state matrices turns out to be remarkably efficient to count knot mosaics.

Throughout this paper, the term ``knot" means either a knot or a link.
We begin by explaining the basic notion of knot mosaics.
Let $\mathbb{T}$ denote the set of the following eleven symbols that are called {\em mosaic tiles\/}; \\

\begin{figure}[h]
\includegraphics{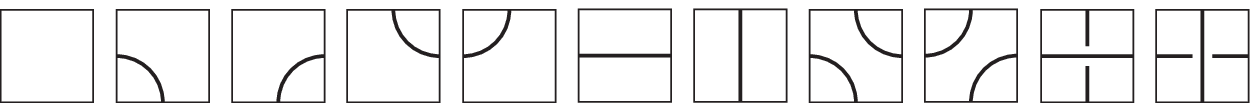}
\[ \ T_0 \hspace{8mm} T_1 \hspace{8mm} T_2 \hspace{8mm} T_3 \hspace{8mm} T_4 \hspace{8mm}
T_5 \hspace{8mm} T_6 \hspace{8mm} T_7 \hspace{8mm} T_8 \hspace{8mm}
T_9 \hspace{8mm} T_{10} \]
\vspace{-7mm}
\label{fi1}
\end{figure}

For positive integers $m$ and $n$,
an {\em $(m,n)$-mosaic\/} is an $m \times n$ matrix $M=(M_{ij})$ of mosaic tiles.
We denote the set of all $(m,n)$-mosaics by $\mathbb{M}^{(m,n)}$.
Note that $\mathbb{M}^{(m,n)}$ has $11^{mn}$ elements.
This definition is an extended version of the definition of an {\em $n$-mosaic\/}
as an $n \times n$ matrix of mosaic tiles in \cite{LK3}.

A {\em connection point\/} of a mosaic tile is defined as the midpoint of a mosaic tile edge
that is also the endpoint of a curve drawn on the tile.
Then each tile has zero, two or four connection points as follows; 

\begin{figure}[h]
\includegraphics{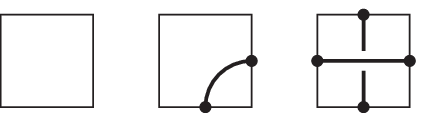}
\vspace{-2mm}
\label{fi2}
\end{figure}

We say that two tiles in a mosaic are {\em contiguous\/} if they lie immediately next to each other
in either the same row or the same column.
A mosaic is said to be {\em suitably connected\/}
if any pair of contiguous mosaic tiles have or do not have connection points simultaneously
on their common edge.
A {\em knot $(m,n)$-mosaic\/} is a suitably connected $(m,n)$-mosaic
whose boundary edges do not have connection points.
Then this knot $(m,n)$-mosaic represents a specific knot.
$\mathbb{K}^{(m,n)}$ denotes the subset of $\mathbb{M}^{(m,n)}$ of all knot $(m,n)$-mosaics.
A knot $(n,n)$-mosaic is simply specified by a {\em knot $n$-mosaic\/}.
The examples of mosaics in Figure \ref{fig1} are a non-knot $(4,5)$-mosaic and the trefoil knot 4-mosaic.
Also the reader finds a complete list of all 22 knot 3-mosaics in Appendix A in \cite{LK3}.

\begin{figure}[h]
\includegraphics{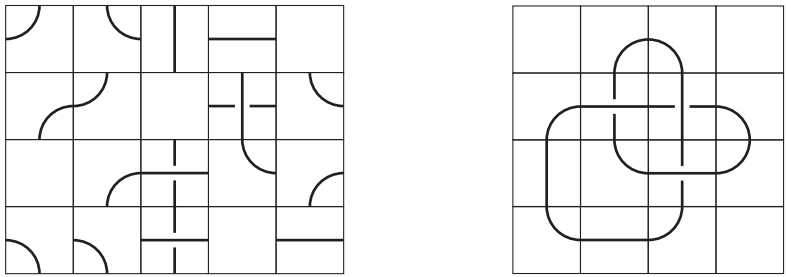}
\caption{Examples of mosaics}
\label{fig1}
\end{figure}

As an analog to the planar isotopy moves and the Reidemeister moves for standard knot diagrams,
Lomonaco and Kauffman \cite{LK3} created the $11$ mosaic planar isotopy moves
and the mosaic Reidemeister moves on knot mosaics.
They conjectured that
for any two tame knots $K_1$ and $K_2$,
and their arbitrary chosen mosaic representatives $M_1$ and $M_2$, respectively,
$K_1$ and $K_2$ are of the same knot type if and only if
$M_1$ and $M_2$ are of the same knot mosaic type, which is defined in \cite{LK3}.
This means that knot mosaic type is a complete invariant of tame knots.
Kuriya and Shehab \cite{KS} verified Lomonaco--Kauffman conjecture.

Lomonaco and Kauffman also proposed several questions related to knot mosaics.
$D^{(m,n)}$ denotes the total number of elements of $\mathbb{K}^{(m,n)}$.
In the series of recent papers \cite{HLLO1, HLLO2},
the authors found some results about $D^{(m,n)}$.
They showed that $D^{(1,n)}=1$, $D^{(2,n)}=2^{n-1}$ and $D^{(3,n)}=\frac{2}{5}(9 \cdot 6^{n-2} + 1)$
for a positive integer $n$,
and found a table of the precise values of $D^{(m,n)}$ for $m,n = 4,5,6$
(note that $D^{(m,n)} = D^{(n,m)}$);
\vspace{3mm}

\begin{center}
\begin{tabular}{|c|r|r|r|}   \hline
$D^{(m,n)}$ & $n=4$ & $n=5$ & $n=6$ \\    \hline
$m=4$ & $2594$ & $54,226$ & $1,144,526$ \\    \hline
$m=5$ & & $4,183,954$ & $331,745,962$ \\    \hline
$m=6$ & & & $101,393,411,126$ \\   \hline
\end{tabular}
\end{center}

\vspace{3mm}
\noindent Furthermore a lower and an upper bounds on $D^{(m,n)}$ for $m,n \geq 3$ were established;
$$2^{(m-3)(n-3)} \leq \frac{275}{2(9 \cdot 6^{m-2} + 1)(9 \cdot 6^{n-2} + 1)}
\cdot D^{(m,n)} \leq 4.4^{(m-3)(n-3)}.$$

In this paper, we construct an algorithm producing the precise value of $D^{(m,n)}$ in general.

\begin{theorem}\label{thm:main}
For integers $m,n \geq 2$,
the total number $D^{(m,n)}$ of all knot $(m,n)$-mosaics is the following;
$$ D^{(m,n)} = 2 \, \| (X_{m-2}+O_{m-2})^{n-2} \| $$
where $2^{m-2} \times 2^{m-2}$ matrices $X_{m-2}$ and $O_{m-2}$ are defined by
$$ X_{k+1} = 
\begin{bmatrix} X_k & O_k \\ O_k & X_k  \end{bmatrix}
\ \mbox{and } \
O_{k+1} = 
\begin{bmatrix} O_k & X_k \\ X_k & 4 \, O_k  \end{bmatrix} $$
for $k=0,1, \dots, m-3$, with $1 \times 1$ matrices
$X_0 = \begin{bmatrix} 1  \end{bmatrix}$ and
$O_0 = \begin{bmatrix} 1  \end{bmatrix}$.
\end{theorem}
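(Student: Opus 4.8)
The plan is to realize $D^{(m,n)}$ as a transfer-matrix count over columns, identify the resulting transfer matrix with $X_{m-1}$, and then fold its block symmetry to produce the factor $2$ and the matrix $X_{m-2}+O_{m-2}$. First I would reduce from tiles to edge-states. Reading off the eleven tiles, a tile is determined by the subset of its four edges that carry a connection point: the empty subset is realized by exactly one tile ($T_0$), each two-element subset by exactly one tile (an arc for an adjacent pair, a line for an opposite pair), the four-element subset by exactly four tiles (the two double-arcs and the two crossings $T_7$--$T_{10}$), and no odd subset is realizable. Hence a knot $(m,n)$-mosaic is the same datum as an assignment of a bit to every interior edge of the grid so that every cell meets an even number of occupied edges and every boundary edge is unoccupied, weighted by $4^{c}$ where $c$ is the number of cells all of whose four edges are occupied. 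This gives $D^{(m,n)}=\sum 4^{c}$ summed over all such assignments.

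Next I would process the grid column by column. The state on the vertical interface between two adjacent columns is the occupancy pattern $\ell\in\{0,1\}^m$ of its $m$ edges. Within one column the interior horizontal edges are forced by the running parity $h_i=\sum_{k\le i}(\ell_k+r_k)\bmod 2$ (with $h_0=h_m=0$), so a transition from $\ell$ to the next pattern $r$ is admissible exactly when $\ell$ and $r$ have equal parity, and is then unique with weight $\prod_{i}4^{[\ell_i=r_i=1,\ h_{i-1}=1]}$; a cell is full precisely when its two interface bits are $1$ and the incoming carry is odd. Because both boundary patterns are $\mathbf 0$ (even), and the first and last columns each contribute the all-ones vector over even patterns (no cell can be full when one side is all zero), one obtains
$$D^{(m,n)}=\bigl\|E_m^{\,n-2}\bigr\|,$$
where $E_m$ is the $2^{m-1}\times 2^{m-1}$ transfer matrix restricted to even patterns and $\|\cdot\|$ sums all entries; for $n=2$ this reads $\|E_m^0\|=2^{m-1}=D^{(2,m)}$.

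The heart of the argument is to prove, by induction on $m$, that under a suitable ordering of the even patterns $E_m$ equals $X_{m-1}$. I would set $X_k:=E_{k+1}$ and introduce the auxiliary matrix $O_k$ as the block of $E_{k+2}$ pairing even left patterns with odd right patterns. Splitting the even patterns of height $k+2$ according to the bottom interface bit, the bottom (boundary) row contributes weight $1$ and ties the carry $h_{k+1}$ to $\ell_{k+2}+r_{k+2}$, while the previously boundary row $k+1$ now becomes interior. A short carry computation shows that this row can be full only in the carry-flipping off-diagonal blocks, which yields exactly $X_{k+1}=\left[\begin{smallmatrix}X_k & O_k\\ O_k & X_k\end{smallmatrix}\right]$; a parallel analysis of $O_{k+1}$ produces $O_{k+1}=\left[\begin{smallmatrix}O_k & X_k\\ X_k & 4O_k\end{smallmatrix}\right]$, the lone factor $4$ recording the single configuration in which the newly interior cell is fully occupied while the carry stays odd. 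Getting these four blocks right --- especially the placement of the single $4$ and the even/odd bookkeeping in the off-diagonal blocks --- is the step I expect to be most delicate.

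Finally I would fold the block symmetry. Writing $X_{m-1}=\left[\begin{smallmatrix}X_{m-2} & O_{m-2}\\ O_{m-2} & X_{m-2}\end{smallmatrix}\right]$ and splitting the all-ones vector as $\mathbf 1=\big[\begin{smallmatrix}\mathbf 1'\\ \mathbf 1'\end{smallmatrix}\big]$, the identity $X_{m-1}\big[\begin{smallmatrix}u\\ u\end{smallmatrix}\big]=\big[\begin{smallmatrix}(X_{m-2}+O_{m-2})u\\ (X_{m-2}+O_{m-2})u\end{smallmatrix}\big]$ gives, by an immediate induction, $X_{m-1}^{\,p}\big[\begin{smallmatrix}\mathbf 1'\\ \mathbf 1'\end{smallmatrix}\big]=\big[\begin{smallmatrix}(X_{m-2}+O_{m-2})^{p}\mathbf 1'\\ (X_{m-2}+O_{m-2})^{p}\mathbf 1'\end{smallmatrix}\big]$. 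Therefore $\|X_{m-1}^{\,p}\|=2\,\|(X_{m-2}+O_{m-2})^{p}\|$, and combining this with $D^{(m,n)}=\|E_m^{\,n-2}\|=\|X_{m-1}^{\,n-2}\|$ yields $D^{(m,n)}=2\,\|(X_{m-2}+O_{m-2})^{n-2}\|$, as claimed.
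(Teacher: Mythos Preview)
Your approach is correct but takes a genuinely different route from the paper's. The paper first strips the outer boundary via a geometric \emph{Twofold rule} --- every suitably connected $(m-2,n-2)$-mosaic extends to exactly two knot $(m,n)$-mosaics --- so the factor $2$ and the size reduction both come from that one step; the transfer matrix is then directly $N^{(m-2,1)}=X_{m-2}+O_{m-2}$ acting on all $2^{m-2}$ interface patterns (no parity restriction), with $X_p,O_p$ defined combinatorially by whether the bottom tile of a height-$p$ column carries a connection point. You instead keep the full height $m$, restrict to even interface patterns, and identify the resulting $2^{m-1}\times 2^{m-1}$ transfer matrix with $X_{m-1}$; your factor $2$ then emerges algebraically from the block-circulant form $X_{m-1}=\bigl[\begin{smallmatrix}X_{m-2}&O_{m-2}\\O_{m-2}&X_{m-2}\end{smallmatrix}\bigr]$ and the folding $\|X_{m-1}^{p}\|=2\|(X_{m-2}+O_{m-2})^{p}\|$. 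The paper's route is shorter and gives $X_p,O_p$ an immediate combinatorial meaning, while yours reveals the factor $2$ as a symmetry of the transfer matrix rather than a separate geometric lemma. One caution: your definition of $O_k$ as ``the block of $E_{k+2}$ pairing even left with odd right'' is ambiguous, since $E_{k+2}$ as you set it up is indexed only by even patterns; a cleaner way to close the argument is to observe that dropping the top bit bijects even $m$-patterns with all $(m-1)$-patterns and that the top cell (with $h_0=0$) is never full, so $E_m$ is literally the paper's $X_{m-1}$ under this relabeling --- after which your block-folding step finishes the proof without needing to rerun the recursion.
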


Here $\| N \|$ denotes the sum of all entries of a matrix $N$.
For $n=2$, $(X_{m-2}+O_{m-2})^0$ means the identity matrix of size $2^{m-2} \times 2^{m-2}$.
We have calculated $D^{(n,n)}$ for $m=n=1,2, \dots, 13$
as given in the following table.
\vspace{3mm}

\begin{center}
{\scriptsize
\begin{tabular}{ll}      \hline \hline
$n$ \ \ \ & $D^{(n,n)}$ \\    \hline
1 & 1 \\
2 & 2 \\
3 & 22 \\
4 & 2594 \\
5 & 4183954 \\
6 & 101393411126 \\
7 & 38572794946976688 \\
8 & 234855052870954480828416 \\
9 & 23054099362200399656046175453184 \\
10 & 36564627559441092217310409777161751756800 \\
11 & 937273142571326423641676956468995920021677311787008 \\
12 & 388216021519370806221346434513102393133985590844312961759051776 \\
13 & 2597619491722287317211028202262384724016872304209163446959826047706385612800 \\   \hline \hline
\end{tabular}
}
\end{center}

\vspace{3mm}
Indeed $D^{(n,n)}$ grows in a quadratic exponential rate. 
The growth constant $\lim_{n \rightarrow \infty} (D^{(n,n)})^{\ \frac{1}{n^2}}$ exists and lies 
between  $4$ and $\frac{5+ \sqrt{13}}{2} \ (\approx 4.303)$.
This result was proved recently by Oh \cite{Oh}.
Other issues for knot mosaics involve considering  mosaic representations on the torus
rather than in the plane.
A knot toroidal $(m,n)$-mosaic is a suitably connected $(m,n)$-mosaic constructed on a torus
by identifying their boundaries properly.
Recently the authors and Yeon \cite{OHLLY} improved this state matrix algorithm to find the total number of
knot toroidal $(m,n)$-mosaics for positive co-prime integers $m$ and $n$.
Another result about knot toroidal mosaics is found in \cite{CL}.
Also a result about mosaic representations of graphs with at most 4 valencies
is considered in \cite{OH}.
Mirror-curve representations of knots that are similar to mosaic representations
were treated in \cite{JRSZ}.

Another interesting question related to knot mosaics is the mosaic number of knots.
Define the {\em mosaic number\/} $m(K)$ of a knot $K$ as the smallest integer $n$
for which $K$ is representable as a knot $n$-mosaic.
For example, the mosaic number of the trefoil is 4 as illustrated in Figure \ref{fig1}.
One question is the following:
{\em Is this mosaic number related to the crossing number of a knot?\/}
The authors \cite{LHLO} found an upper bound on the mosaic number as follows;
If $K$ is a nontrivial knot or a non-split link except the Hopf link,
then $m(K) \leq c(K) + 1$.
Moreover if $K$ is prime and non-alternating except the $6^3_3$ link, then $m(K) \leq c(K) - 1$.
Note that the mosaic numbers of the Hopf link and the $6^3_3$ link are 4 and 6, respectively.

\section{Suitably connected mosaics and state matrices}

Let $p$ and $q$ be positive integers.
$\mathbb{S}^{(p,q)}$ denotes the set of all suitably connected $(p,q)$-mosaics
that possibly have connection points on their boundary edges.
A suitably connected (5,3)-mosaic is depicted in Figure \ref{fig2} as an example.
This is a submosaic of a knot mosaic in Lomonaco and Kauffman's definition.

\begin{figure}[h]
\includegraphics{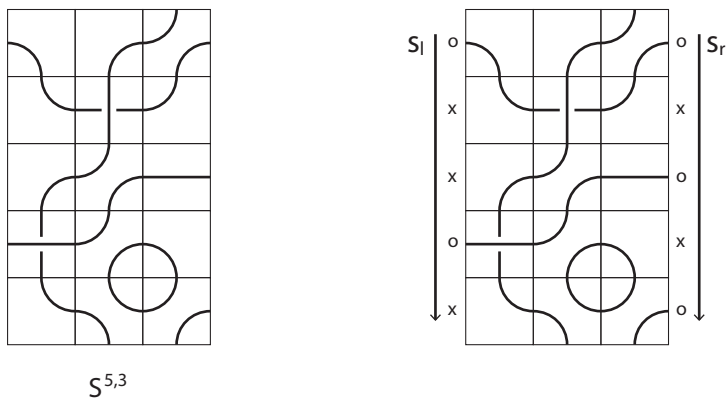}
\caption{Suitably connected (5,3)-mosaic $S^{5,3}$}
\label{fig2}
\end{figure}

For simplicity of exposition, a mosaic tile is called {\em $l$-, $r$-, $t$-\/} and {\em $b$-cp\/}
if it has a connection point on its left, right, top and bottom, respectively.
Sometimes we use two or more letters such as $lt$-cp for the case of both $l$-cp and $t$-cp.
Also we use the sign $\hat{}$ \/ for negation such as
$\hat{l}$-cp means not $l$-cp,
$\hat{l} \hat{t}$-cp means both $\hat{l}$-cp and $\hat{t}$-cp, and
$\widehat{lt}$-cp (which is differ from $\hat{l} \hat{t}$-cp) means the negation of $lt$-cp,
i.e., $\hat{l} t$-, $l \hat{t}$- or $\hat{l} \hat{t}$-cp.
\vspace{3mm}

\noindent {\bf Choice rule.\/} Each $M_{ij}$ in a suitably connected mosaic has four
choices of mosaic tiles as $T_7$, $T_8$, $T_9$ and $T_{10}$ if it is
$lrb$-cp (so automatically $t$-cp), and unique choice if it is $\widehat{lrb}$-cp.
\vspace{3mm}

For a suitably connected $(p,q)$-mosaic $S^{p,q} =  (M_{ij})$ where $i=1,\dots,p$ and $j=1,\dots,q$,
an {\em $l$-state\/} of $S^{p,q}$ indicates the presence of connection points
of $p$ mosaic tile edges on the leftmost boundary,
and we denote that $s_l(S^{p,q}) = s_l(M_{11}) s_l(M_{21}) \dots s_l(M_{p1})$
where each $s_l(M_{i1})$ denotes ``x'' if $M_{i1}$ is $\hat{l}$-cp and ``o'' if $M_{i1}$ is $l$-cp.
Similarly we define an {\em $r$-state\/} of $S^{p,q}$ which indicates the presence of connection points
of $p$ mosaic tile edges on the rightmost boundary.
For $(5,3)$-mosaic $S^{5,3}$ drawn in the figure,
$$s_l(S^{5,3}) = \mbox{oxxox \ and } s_r(S^{5,3}) = \mbox{oxoxo.}$$
Note that $\mathbb{S}^{(p,q)}$ has possibly $2^p$ kinds of $l$-states and also $2^p$ kinds of $r$-states.
We arrange the elements of the set of all states in the backward of lexicographical order
such as xxx, oxx, xox, oox, xxo, oxo, xoo and ooo for $p=3$.

Now we are ready to define a state matrix which turns out to be remarkably efficient to count
the number of suitably connected mosaics.
A {\em state matrix\/} for $\mathbb{S}^{(p,q)}$  is a $2^p \times 2^p$ matrix $N^{(p,q)} = (N_{ij})$
where $N_{ij}$ is the number of all suitably connected $(p,q)$-mosaics
that have the $i$-th $l$-state and the $j$-th $r$-state in the set of $2^p$ states of the order  arranged above.

Furthermore, we split the state matrix $N^{(p,1)}$, only when $q=1$,
into two $2^p \times 2^p$ matrices, namely $X_p$ and $O_p$ as follows.
Each $(i,j)$-entry of $X_p$ (or $O_p$) indicates the number of all suitably connected $(p,1)$-mosaics
that have the $i$-th $l$-state and the $j$-th $r$-state, and additionally
whose bottom mosaic tiles are $\hat{b}$-cp (or $b$-cp, respectively).
Obviously $N^{(p,1)} = X_p + O_p$.
\vspace{3mm}

\noindent {\em Direct construction of the state matrix $N^{(1,1)}$.} \\
From eleven mosaic tiles in Figure \ref{fig3},
we get the following state matrices;
$$X_1 = \begin{bmatrix} 1 & 1 \\ 1 & 1  \end{bmatrix}, \ \ 
O_1 = \begin{bmatrix} 1 & 1 \\ 1 & 4  \end{bmatrix} \ \ \mbox{and} \ \
N^{(1,1)} = \begin{bmatrix} 2 & 2 \\ 2 & 5  \end{bmatrix}.$$
Each of four suitably connected $(1,1)$-mosaics on the first line in the figure
represents each entry 1 of $X_1$,
each of left three mosaics on the second line represents each entry 1 of $O_1$,
and the remaining four mosaics represent $(2,2)$-entry 4 of $O_1$.
Note that the sum of all entries of $N^{(1,1)}$ is the total number of elements of $\mathbb{S}^{(1,1)}$
which is obviously 11.

\begin{figure}[h]
\includegraphics{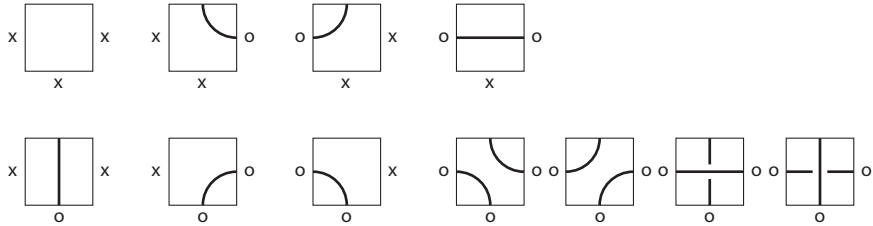}
\caption{Finding the state matrix $N^{(1,1)}$}
\label{fig3}
\end{figure}

Note that each element of $\mathbb{S}^{(1,1)}$ can be extended to exactly two knot 3-mosaics
because we have two choices of adjoining eight mosaic tiles surrounding it,
satisfying that all mosaic tiles are suitably connected.
This implies $D^{(3,3)} = 22$.

We can easily extend this argument to each element of $\mathbb{S}^{(m-2,n-2)}$
by adjoining $2m+2n-4$ proper mosaic tiles surrounding it.
Since each mosaic tile has even number of connection points,
a suitably connected $(m-2,n-2)$-mosaic has exactly even number of
connection points on its boundary.
To make a knot $(m,n)$-mosaic, all these connection points must be connected
pairwise via mutually disjoint arcs when we adjoin new mosaic tiles.
There are exactly two ways to do as illustrated in Figure \ref{fig4}.
Note that if it has no connection point on the boundary,
then we may add empty tiles or encircle the mosaic with a new circle.
\vspace{3mm}

\noindent {\bf Twofold rule.\/} A suitably connected $(m-2,n-2)$-mosaic can be extended
to exactly two different knot $(m,n)$-mosaics by augmenting the four sides with a row/column of tiles.

\begin{figure}[h]
\includegraphics{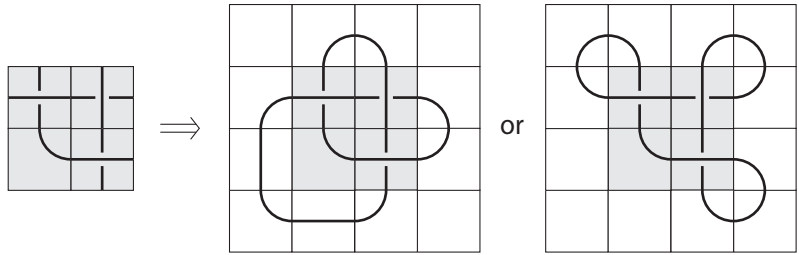}
\caption{The twofold rule}
\label{fig4}
\end{figure}

\section{State matrix $N^{(p,1)}$}
In this section, we establish the state matrix $N^{(p,1)}$ for $\mathbb{S}^{(p,1)}$.
For a $2^{k+1} \times 2^{k+1}$ matrix $N = (N_{ij})$,
the 11-quadrant (similarly 12-, 21- or 22-quadrant) of $N$ denotes
the $2^k \times 2^k$ submatrix $(N_{ij})$ where $1 \leq i, j \leq 2^k$
($1 \leq i \leq 2^k$ and $2^k+1 \leq j \leq 2^{k+1}$,
$2^k+1 \leq i \leq 2^{k+1}$ and $1 \leq j \leq 2^k$,
or $2^k+1 \leq i, j \leq 2^{k+1}$, respectively).

\begin{proposition}\label{prop:np1}
For the set $\mathbb{S}^{(p,1)}$ of all suitably connected $(p,1)$-mosaics,
the associated state matrix $N^{(p,1)}$ can be obtained as follows;
$$ N^{(p,1)} = X_p + O_p$$
where matrices $X_p$ and $O_p$ are defined by
$$ X_{k+1} = 
\begin{bmatrix} X_k & O_k \\ O_k & X_k  \end{bmatrix}
\ \mbox{and } \
O_{k+1} = 
\begin{bmatrix} O_k & X_k \\ X_k & 4 \, O_k  \end{bmatrix} $$
for $k=1, \dots, p-1$, starting with
$X_1 = \begin{bmatrix} 1 & 1 \\ 1 & 1  \end{bmatrix}$ and \
$O_1 = \begin{bmatrix} 1 & 1 \\ 1 & 4  \end{bmatrix}$.
\end{proposition}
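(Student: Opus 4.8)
The plan is to prove the proposition by induction on $p$, peeling off the bottom tile of a $(p,1)$-mosaic. Write $\tilde{X}_p$ and $\tilde{O}_p$ for the matrices that \emph{actually} count suitably connected $(p,1)$-mosaics whose bottom tile is $\hat{b}$-cp and $b$-cp respectively, so that by definition $N^{(p,1)} = \tilde{X}_p + \tilde{O}_p$; the goal is to show that these coincide with the recursively defined $X_p$ and $O_p$. The base case $p=1$ is exactly the direct construction of $N^{(1,1)}$ already carried out, giving $\tilde{X}_1 = X_1$ and $\tilde{O}_1 = O_1$. For the inductive step I would establish that $\tilde{X}_{k+1}$ and $\tilde{O}_{k+1}$ obey the same block recurrences as $X_{k+1}$ and $O_{k+1}$.

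First I would record how the chosen ordering of states interacts with the block structure. Since the top tile contributes the least significant bit and the bottom tile the most significant bit of the state index, the first $2^{k}$ indices are precisely the states whose bottom tile is $\hat{l}$-cp (for rows) or $\hat{r}$-cp (for columns), and the last $2^{k}$ indices those whose bottom tile is $l$-cp or $r$-cp. Deleting the bottom character of a length-$(k+1)$ state recovers a length-$k$ state in the induced $(k,1)$-ordering. Consequently, reading the $(\alpha' u, \beta' v)$-entry of a $2^{k+1} \times 2^{k+1}$ matrix, where $u,v \in \{x,o\}$ are the bottom characters, amounts to selecting the quadrant according to $(u,v)$ (the $11$-quadrant for $(x,x)$, $12$ for $(x,o)$, $21$ for $(o,x)$, $22$ for $(o,o)$) and the inner entry according to $(\alpha', \beta')$.

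Next I would set up the peeling map. A suitably connected $(k+1,1)$-mosaic decomposes uniquely as its top $(k,1)$-submosaic $S'$ together with the bottom tile $T$, and suitable connectivity across the shared edge forces the top connection point of $T$ to agree with the bottom connection point $\gamma$ of $S'$ — which is exactly the datum distinguishing $\tilde{X}_k$ from $\tilde{O}_k$. Writing $c(t,l,r,b)$ for the number of mosaic tiles realizing a prescribed connection-point pattern, the Choice rule gives $c=0$ when $t+l+r+b$ is odd, $c=4$ when all of $t,l,r,b$ are present, and $c=1$ otherwise; hence for $T$ with left $u$, right $v$, bottom $w$ and top $\gamma$,
$$\tilde{N}_{k+1}^{w}[\alpha' u, \beta' v] = c(x,u,v,w)\,\tilde{X}_k[\alpha',\beta'] + c(o,u,v,w)\,\tilde{O}_k[\alpha',\beta'],$$
where $\tilde{N}^{x} = \tilde{X}$ and $\tilde{N}^{o} = \tilde{O}$, summing over the two possibilities $\gamma \in \{x,o\}$.

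Finally I would run the eight cases obtained from the four choices of $(u,v)$ and the two choices of $w$. In each case the parity condition annihilates one of the two tile counts, leaving a single surviving term, and in the lone case $u=v=w=o$ with $\gamma=o$ the all-present count contributes the factor $4$. Collecting the results reproduces exactly
$$\tilde{X}_{k+1} = \begin{bmatrix} \tilde{X}_k & \tilde{O}_k \\ \tilde{O}_k & \tilde{X}_k \end{bmatrix}, \qquad \tilde{O}_{k+1} = \begin{bmatrix} \tilde{O}_k & \tilde{X}_k \\ \tilde{X}_k & 4\,\tilde{O}_k \end{bmatrix},$$
so the induction closes and $N^{(p,1)} = X_p + O_p$. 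I expect the main obstacle to be the bookkeeping of the second step: one must verify that the state ordering really does send the bottom tile to the block index and the remaining tiles to the inner index \emph{compatibly at every level}, since any mismatch there would scramble the quadrants. Once that alignment is pinned down, the tile-counting case analysis driven by the Choice rule is entirely routine.
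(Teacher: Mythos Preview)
Your proposal is correct and follows essentially the same approach as the paper: induction on $p$ by peeling off the bottom tile $M_{k+1,1}$, using the backward-lexicographic ordering to identify the quadrant from the bottom tile's left/right connection points, and invoking the Choice rule so that the top connection point of $M_{k+1,1}$ selects $X_k$ or $O_k$ for the remaining $(k,1)$-mosaic, with the factor $4$ appearing exactly in the $lrb$-cp case. Your treatment is somewhat more explicit about the index bookkeeping (the alignment of bottom character with block index) than the paper's, which simply asserts it and records the eight cases in a table, but the argument is the same.
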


\begin{proof}
The identity $N^{(p,1)} = X_p + O_p$ follows immediately from the definition of $X_p$ and $O_p$.
We will use the induction on $p$.
Matrices $X_1$ and $O_1$ are already found in the previous section.

Assume that matrices $X_k$ and $O_k$ satisfy the statement.
Let $S^{k+1,1} = (M_{i,1})$ be a suitably connected $(k+1,1)$-mosaic of $\mathbb{S}^{(k+1,1)}$.
Consider the bottom mosaic tile $M_{k+1,1}$.
If it is $\hat{l} \hat{r} \hat{b}$-cp, for example,
then $S^{k+1,1}$ should be counted in an entry of the 11-quadrant of $X_{k+1}$.
This is because of the backwardness of lexicographical order of $2^{k+1}$ states.
In this case, $M_{k+1,1}$ has unique choice $T_0$ of mosaic tiles because of Choice rule.
Let $S^{k,1}$ be the associated suitably connected $(k,1)$-mosaic
obtained from $S^{k+1,1}$ by ignoring $M_{k+1,1}$.
Then the bottom mosaic tile of $S^{k,1}$ must be $\hat{b}$-cp,
so the associated state matrix for all possible $S^{k,1}$ is $X_k$.
The eight figures in Figure \ref{fig5} and the table below explain all eight cases
according to the presence of connection points of $M_{k+1,1}$.
Notice that only when $M_{k+1,1}$ is $lrb$-cp,
it has four choices of mosaic tiles $T_7$, $T_8$, $T_9$ and $T_{10}$.
Thus the associated submatrix must be $4 \, O_k$ instead of $O_k$.
Now we complete the proof.
\end{proof}

\begin{figure}[h]
\includegraphics{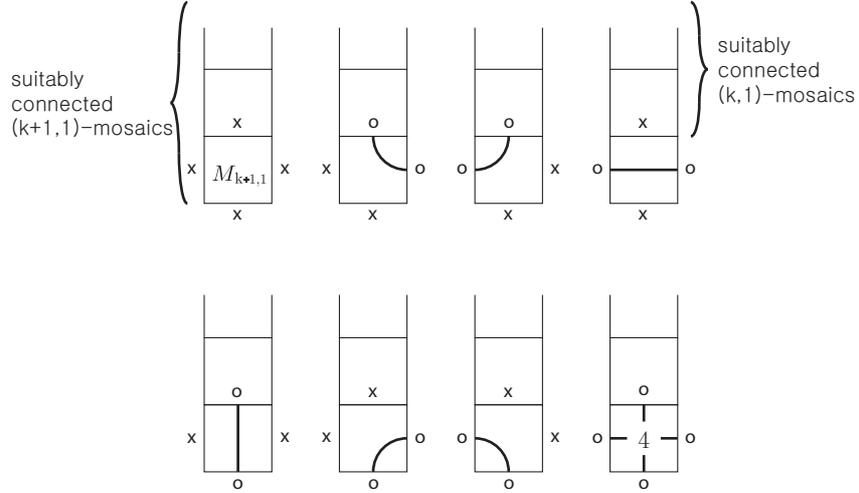}
\caption{Suitably connected $(k+1,1)$-mosaics}
\label{fig5}
\end{figure}

\begin{center}
\begin{tabular}{|c|c|c|c|c|}   \hline
 & {\em quadrants\/} & \multicolumn{2}{c|}{\em associated $M_{k+1,1}$\/} &
{\em submatrix\/} \\    \hline
\multirow{4}{8mm}{$X_{k+1}$}
 & 11-quadrant & $\hat{l} \hat{r} \hat{b}$-cp &
$T_0$ & $X_k$ \\
 & 12-quadrant & $\hat{l} r \hat{b}$-cp &
$T_3$ & $O_k$ \\
 & 21-quadrant & $l \hat{r} \hat{b}$-cp &
$T_4$ & $O_k$ \\
 & 22-quadrant & $l r \hat{b}$-cp &
$T_5$ & $X_k$ \\    \hline
\multirow{4}{8mm}{$O_{k+1}$}
 & 11-quadrant & $\hat{l} \hat{r} b$-cp &
$T_6$ & $O_k$ \\
 & 12-quadrant & $\hat{l} r b$-cp &
$T_2$ & $X_k$ \\
 & 21-quadrant & $l \hat{r} b$-cp &
$T_1$ & $X_k$ \\
 & 22-quadrant & $l r b$-cp &
$T_7$, $T_8$, $T_9$, $T_{10}$ & $4 \, O_k$ \\  \hline
\end{tabular}
\end{center}
\vspace{5mm}

For example, let us try to find $(10,11)$-entry of $O_4$.
This entry can be written as
$(2^3 + 0 \cdot 2^2 + 2^1 + 0 \cdot 2^0, 2^3 + 0 \cdot 2^2 + 2^1 + 2^0)$-entry,
and so counts the total number of all suitably connected $(4,1)$-mosaics
with xoxo $l$-state and ooxo $r$-state, and additionally
whose bottom mosaic tiles are $b$-cp as shown in Figure \ref{fig6}.
In this case, $M_{41}$ is $l r b$-cp, so has 4 choices of mosaic tiles.
Thus $M_{31}$ is $\hat{l} \hat{r} b$-cp, so it must be unique choice $T_6$.
Similarly $M_{21}$ and $M_{11}$ have 4 choices and unique choice, respectively.
Thus the entry is $4^2$.

\begin{figure}[h]
\includegraphics{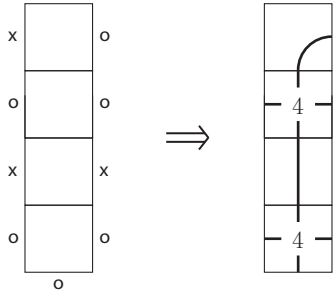}
\caption{Finding $(10,11)$-entry of $O_4$}
\label{fig6}
\end{figure}

\section{State matrix $N^{(p,q)}$ and the proof of Theorem \ref{thm:main}}

In this section, we find the state matrix $N^{(p,q)}$ for $\mathbb{S}^{(p,q)}$
and prove Theorem \ref{thm:main}.

\begin{proposition}\label{prop:npq}
For the set $\mathbb{S}^{(p,q)}$ of all suitably connected $(p,q)$-mosaics,
the associated state matrix $N^{(p,q)}$ is the following;
$$ N^{(p,q)} = (N^{(p,1)})^q.$$
\end{proposition}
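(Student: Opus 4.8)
The plan is to exploit the fact that suitable connectedness is a purely \emph{local} condition imposed on pairs of contiguous tiles, so it factors across any vertical cut of a mosaic. Concretely, I would regard a suitably connected $(p,q)$-mosaic as a horizontal concatenation of $q$ columns $C_1, \dots, C_q$, each of which is itself a suitably connected $(p,1)$-mosaic, and observe that the only constraints linking two adjacent columns are the matching conditions on their common internal boundary, namely $s_r(C_t) = s_l(C_{t+1})$. This is exactly a transfer-matrix setup, with $N^{(p,1)}$ playing the role of the transfer matrix, and the proposition is precisely the assertion that concatenation corresponds to matrix multiplication. I would carry out the argument by induction on $q$.

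For the base case $q=1$ the statement $N^{(p,1)} = (N^{(p,1)})^1$ is tautological. For the inductive step I would establish the key bijection: a suitably connected $(p,q)$-mosaic $S^{p,q}$ corresponds to a unique pair $(S', S'')$, where $S'$ consists of its first $q-1$ columns and $S''$ of its last column, such that $S'$ is a suitably connected $(p,q-1)$-mosaic, $S''$ is a suitably connected $(p,1)$-mosaic, and the gluing condition $s_r(S') = s_l(S'')$ holds. The point is that suitable connectedness of $S^{p,q}$ is equivalent to suitable connectedness of $S'$, suitable connectedness of $S''$, and the single matching condition across the cut between column $q-1$ and column $q$; no other interaction occurs, because every contiguity constraint lives entirely inside $S'$, entirely inside $S''$, or exactly on that one boundary. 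Moreover $s_l(S^{p,q}) = s_l(S')$ and $s_r(S^{p,q}) = s_r(S'')$, so the outer states are inherited directly from the two pieces.

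Translating this bijection into matrix entries, the $(i,j)$-entry of $N^{(p,q)}$ counts the suitably connected $(p,q)$-mosaics whose $l$-state is the $i$-th state and whose $r$-state is the $j$-th state. Summing over the intermediate state at the cut, and noting that both the $r$-state of $S'$ and the $l$-state of $S''$ range over the \emph{same} ordered list of $2^p$ states, I obtain
$$ N^{(p,q)}_{ij} = \sum_{k=1}^{2^p} N^{(p,q-1)}_{ik} \, N^{(p,1)}_{kj} = \bigl( N^{(p,q-1)} N^{(p,1)} \bigr)_{ij}. $$
Invoking the inductive hypothesis $N^{(p,q-1)} = (N^{(p,1)})^{q-1}$ then yields $N^{(p,q)} = (N^{(p,1)})^{q}$, completing the induction.

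I expect the main obstacle to be the careful justification that suitable connectedness factors cleanly across the vertical cut: in particular, one must verify that the gluing is captured \emph{entirely} by the equality of the $r$-state of the left block and the $l$-state of the right block, with no hidden constraint arising from the four-way Choice-rule ambiguity at individual tiles (the $lrb$-cp tiles contributing the factor $4$ are already accounted for inside each $(p,1)$-column, so they do not interfere with the interface). Once this locality is pinned down, matching the indexing conventions—the same backward-lexicographic ordering of the $2^p$ states is used both for the $r$-states of $S'$ and for the $l$-states of $S''$—makes the summation literally a matrix product, and the induction closes without further difficulty.
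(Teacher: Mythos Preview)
Your proposal is correct and follows essentially the same approach as the paper: induction on $q$, splitting a suitably connected $(p,q)$-mosaic into its first $q-1$ columns and its last column, matching via the common internal state, and reading off the entrywise identity $N^{(p,q)}_{ij} = \sum_k N^{(p,q-1)}_{ik} N^{(p,1)}_{kj}$. Your version is somewhat more explicit about why the gluing condition is captured entirely by the state equality, but the structure is the same.
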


\begin{proof}
We use the induction on $q$.
Assume that $N^{(p,k)} = (N^{(p,1)})^k$.
Let $S^{p,k+1}$ be a suitably connected $(p,k+1)$-mosaic in $\mathbb{S}^{(p,k+1)}$.
Also let $S^{p,k}$ and $S^{p,1}$ be the suitably connected $(p,k)$-mosaic obtained by
ignoring the rightmost column of $S^{p,k+1}$ and the suitably connected $(p,1)$-mosaic
which is just the rightmost column of $S^{p,k+1}$, respectively.
Then $r$-state of $S^{p,k}$ is the same as $l$-state of $S^{p,1}$ as shown in Figure \ref{fig7}.
Remark that $N^{(p,k+1)} = (N^{(k+1)}_{ij})$ is the state matrix for $\mathbb{S}^{(p,k+1)}$
where each entry $N^{(k+1)}_{ij}$ counts the number of all suitably connected $(p,k+1)$-mosaics
that have the $i$-th $l$-state and the $j$-th $r$-state in the set of $2^p$ states.
Also consider the state matrices $N^{(p,k)} = (N^{(k)}_{is})$
and $N^{(p,1)} = (N^{(1)}_{sj})$ defined similarly.
Among these suitably connected $(p,k+1)$-mosaics counted in each entry $N^{(k+1)}_{ij}$,
the number of all mosaics whose $r$-state of the $k$-th column
(or equally $l$-state of the $(k+1)$-th column) is the $s$-th state in the set of $2^p$ states
is the product of $N^{(k)}_{is}$ and $N^{(1)}_{sj}$.
Since all $2^p$ states can be appeared as states of connection points
where $S^{p,k}$ and $S^{p,1}$ meet,
we get
$$ N^{(k+1)}_{ij} = \sum^{2^p}_{s=1} N^{(k)}_{is} N^{(1)}_{sj}.$$
This implies that
$$ N^{(p,k+1)} = N^{(p,k)} N^{(p,1)} = (N^{(p,1)})^{k+1}.$$
\end{proof}

\begin{figure}[h]
\includegraphics{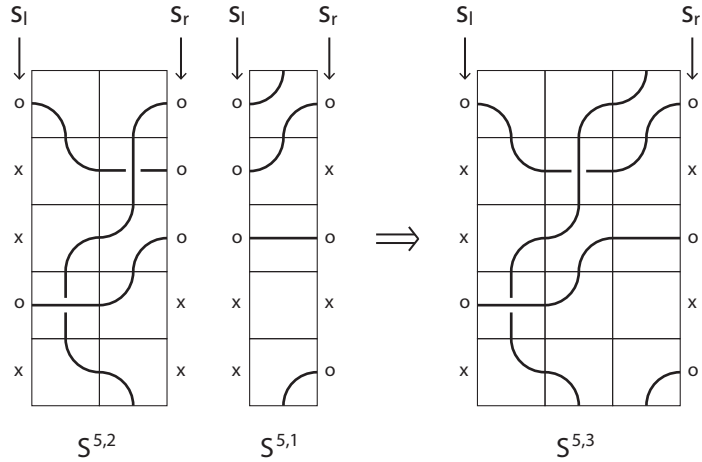}
\caption{Adjoining two suitably connected mosaics}
\label{fig7}
\end{figure}

Now we are ready to prove the main theorem.

\begin{proof}[Proof of Theorem \ref{thm:main}]
First assume that $m$ and $n$ are any integers at least 3.
Consider the set $\mathbb{S}^{(m-2,n-2)}$ of all suitably connected $(m-2,n-2)$-mosaics
and the associated state matrix $N^{(m-2,n-2)}$.
By the definition of the state matrix,
all rows represent all $2^{m-2}$ $l$-states
and all columns represent all $2^{m-2}$ $r$-states of mosaics of $\mathbb{S}^{(m-2,n-2)}$.
And each entry of the matrix counts the number of all suitably connected $(m-2,n-2)$-mosaics
having specific $l$-state and $r$-state.
Thus the total number of elements of $\mathbb{S}^{(m-2,n-2)}$ is the sum of all entries of the state matrix,
which is $\| N^{(m-2,n-2)} \|$.

Each suitably connected mosaic in $\mathbb{S}^{(m-2,n-2)}$ can be extended to
exactly two knot $(m,n)$-mosaics by Twofold rule.
Thus the total number of all knot $(m,n)$-mosaics $D^{(m,n)}$ is twice of $\| N^{(m-2,n-2)} \|$.
This fact combined with Proposition \ref{prop:np1} and \ref{prop:npq} completes the proof
except for the case that $m$ or $n$ is 2.

For the case of $m=2$,
we denote two $1 \times 1$ matrices $X_0 = \begin{bmatrix} 1  \end{bmatrix}$ and
$O_0 = \begin{bmatrix} 1  \end{bmatrix}$.
Then the same matrices $X_1$ and $O_1$ are obtained
from the recurrence relations in Proposition \ref{prop:np1},
and also $D^{(2,n)} = 2 \, \| (X_0+O_0)^{n-2} \| = 2^{n-1}$
which is already known.

For the case of $n=2$,
$ D^{(m,2)} = 2 \, \| (X_{m-2}+O_{m-2})^0 \| = 2^{m-1}$
because $(X_{m-2}+O_{m-2})^0$ is the $2^{m-2} \times 2^{m-2}$ identity matrix.
\end{proof}

\end{document}